\documentclass[11pt]{article}
\usepackage[letterpaper,margin=1in]{geometry}
\usepackage{amsmath,amssymb,amsthm,microtype}
\usepackage[T1]{fontenc}
\usepackage{lmodern}
\usepackage[colorlinks=true,linkcolor=blue,citecolor=blue,urlcolor=blue]{hyperref}
\newcommand{\remove}[1]{}
\newtheorem{theorem}{Theorem}
\newtheorem{lemma}{Lemma}
\newcommand{\E}{\mathbb E}
\newcommand{\Prob}{\mathbb P}
\newcommand{\Leb}{\mathrm{Leb}}

\DeclareMathOperator{\clip}{clip}
\DeclareMathOperator{\Var}{Var}

\title{Small Influential Coalitions in $[0,1]^n$ via the Junta Theorem}
\author{Ehud Friedgut}
\date{Revised 30 September 2026}
\begin{document}
\maketitle
\begin{abstract}
Every monotone Boolean function on the continuous cube admits a coalition of
$O(n/(\varepsilon\log n))$ coordinates that can force a fixed output (either zero or one) with
probability at least $1-\varepsilon$. This old conjecture of mine was recently proven by Chattopadhyay and Gurumukhani~\cite{CG}. This writeup contains a simplification of their proof, generated by AI after suggesting the use of the junta theorem of~\cite{F98} and the discretization procedure of ~\cite{F04}. 
\end{abstract}
\section{introduction}
My initiation to the world of Boolean functions began when I was a first year graduate student, with me stumbling on a published mistake, and having an idea about how to fix it. Several years later I published this conjecture\cite{F04}, and after more than two decades it was recently proven by Chattopadhyay and Gurumukhani~\cite{CG}. The purpose of this note is to offer a simplified version of their proof, discovered with the aid of ChatGPT-6 Astra, following some laconic suggestions that I made.

During my first year as a graduate student at the Hebrew University I was taking a seminar with Gil Kalai, and was supposed to present two papers to the class: KKL~\cite{KKL}, and BKKKL~\cite{BKKKL}.  

KKL states (theorem 1) that any Boolean function $f$ on $\{0,1\}^n$ has a variable with influence of order at least $(\frac{\log n}{n}) \Var(f)$, where $\Var(f)$ is the variance of the function. Repeated applications of this theorem yield theorem 2: If $f$ is a Boolean function with expectation, say, $1/2$, there's a coalition of order $\frac{n}{\log(1/\varepsilon) \log n }$ that for at least a $(1-\varepsilon)$ proportion of the inputs can force the function to be either zero or one, as it wishes. The implication from theorem 1 to theorem 2, for a monotone function, is simple. To influence towards 1 take the most influential variable, set it to be equal to one, resulting in an increase of order at least $\log n /n$ in the expectation, rinse, lather, and repeat, resulting in a coalition with strong influence towards 1. Then do the same to get the other half of the final coalition, a set of coordinates that can jointly influence towards 0.

BKKKL has the same two claims, theorem 1 and 2, for functions $f:[0,1]^n \rightarrow {0,1}$. The proof of theorem 1 is a bit more involved than in the discrete case, but for theorem 2 the authors simply offer the observation "by repeated applications of theorem 1, one has:...".  

When first reading BKKKL, after understanding the proofs of KKL theorems 1 and 2, and understanding BKKKL theorem 1, I took BKKKL's assurance for theorem 2 at face value, not giving it a second thought. However, on the evening before presenting the material to the class, while going over the material again, I was stumped; In the discrete  case, when setting a varaiable to equal 1, the effect is that on every fiber where that variable had influence, the expectation of $f$ increased from 1/2 to 1. However, in the continuous case, the initial conditional expectation on a fiber can already be very close to 1 (or to 0), so no apriori bound on the increase in expectation is guaranteed. A useful example to consider, is the one given in the paper BKKKL to illustrate the difference between the discrete and continuous cases: the indicator function of $[1/n,1]^n$. Here all coordinates have constant influence, but setting a coordinate to 1 results in an increase of order only $1/n$. After several hours of unsuccessful attempts at proving BKKKL theorem 2, I realized that not only is the statement incorrect, the example of the aforementioned function, given in the paper, is a counterexample! It is easy to verify that if a set of coordinates has size $o(n)$, than setting all of them to be equal to 1 only increases the expectation of $f$ by $o(1)$. (Gil once mused on this unfortunate glitch, and asked whether a student who proves a theorem in an exam, and also supplies a counterexample should be given partial credit for their effort.) 

After recovering from the shock of my discovery, and being encouraged by the 40\% of the authors of BKKKL who were at the Hebrew University (and subsequently became my mentors), I asked myself whether the situation can be partially salvaged. In the counterexample it's true that any small coalition is unable to force the function's expectation towards 1, but even a single coordinate set to 0 forces the function to be equal to 0. I asked, then, is it true that there's always a small coalition of coordinates that can force the function either to be 1 or to be 0 for most inputs, what's known as one-sided-influence. Several years later, in \cite{F04}, I put this conjecture in print, and it has since, for over two decades, been a constant companion of mine - I have returned to think about it many times throughout the years, including unsuccessful recent attempts using AI.

A couple of weeks ago, as part of the flurry of AI-generated results, I was delighted to see that Chattopadhyay and Gurumukhani of Cornell University had finally managed to put the conjecture to rest, in a 15 page manuscript ~\cite{CG}. One of the main tools that they used was the extremely deep result of Hatami ~\cite{Hatami} , known as Hatami's pseudo-junta theorem. After digesting their proof for several days, it occured to me that perhaps this was overkill, and that the proof could be simplified by using my junta theorem from ~\cite{F04}, a much weaker result than Hatami's, but perhaps sufficient for the task at hand. I therefore fed the C-G manuscript into ChatGPT-6 Astra and supplied the following prompt:

"{\it{
Please read the attached manuscript, and see whether it can be simplified by using Friedgut's junta theorem instead of Hatami's pseudo-junta theorem.}}"

After a few minutes the bot supplied me with a simpler proof, per its instructions. I shared this new simplified version with Chattopadhyay and Gurumukhani, and suggested that they update their version. After a few more days, I had another idea, and suggested to the bot that it read \cite{F04}, the paper where the original conjecture was suggested, and supplied it with the following prompt:
"{\it{
Please read ~\cite{F04} and see whether you can mimic the discretization there, to go directly from $[0,1]^n$ to $\{0,1\}^n$ without going through $\{1,\ldots, r\}^n$, as you did in the previous manuscript.}}"

This induced a further simplification, resulting in the proof included in this note. I communicated this further progress to Chattopadhyay and Gurumukhani, and they suggested that perhaps I should post it, and indeed, the content of the next section of this notevis the output that ChatGPT supplied me with.

I would like to stress that my mathematical contribution to this current project was limited to my two suggestions to the bot that I mentioned above. I did not apply any creativity to the project, just suggested to the bot which tools it might want to use. Furthermore, and this is important in giving Chattopadhyay and Gurumukhani the credit for the breakthrough, I had previously tried using ChatGPT (albeit a weaker model) to solve this problem, and was unsuccessful.

I lightly edited the following section, which was written by ChatGPT.

\section{The statement and proof}
Throughout, monotone means coordinatewise nondecreasing. For
$B\subseteq[n]$ and $b\in\{0,1\}$, write $b_B$ for the constant
assignment $b$ on $B$. A coalition $B$ uses this assignment to force
output $b$, while the remaining coordinates $X_{-B}$ are independent and
uniform on $[0,1]$. The coalition and its target are chosen before these
remaining coordinates are sampled. All logarithms are natural.
\subsection{The statement of the theorem}
\begin{theorem}\label{thm:main}
There is an absolute constant $C$ such that, for $n\ge2$ and
$0<\varepsilon<1/2$, every monotone
$f:[0,1]^n\to\{0,1\}$ admits $B\subseteq[n]$ and $b\in\{0,1\}$ with
\[
 |B|\le \frac{Cn}{\varepsilon\log n},
 \qquad \Prob[f(b_B,X_{-B})=b]\ge1-\varepsilon.
\]
\end{theorem}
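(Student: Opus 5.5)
We plan to reduce to the Boolean cube by the discretization of~\cite{F04}, apply the junta theorem of~\cite{F98} there, and then read off a coalition.

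\textbf{Discretization.} Fix a threshold vector $t\in(0,1)^n$ and a set $S\subseteq[n]$. For $x\in[0,1]^n$ put $y_i=\mathbf 1[x_i\ge t_i]$. By monotonicity, setting a coordinate to $1$ dominates any value $\ge t_i$, and setting it to $0$ is dominated by any value $<t_i$. The plan is to choose each $t_i$ as the median point of the fiber distribution, i.e.\ so that the one-sided gains from pushing coordinate $i$ to $1$ or to $0$ are comparable. Then the Boolean function $g(y)=\E[f(X)\mid y]$ on the biased cube $\prod\mu_{p_i}$ controls the coalition problem for $f$. Concretely, forcing $y_B=b_B$ in $g$ upper bounds what $b_B$ achieves in $f$ after one further rounding, which we handle by monotonicity.

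\textbf{Main dichotomy.} Write $\delta=\varepsilon$ and let $k=c\,\varepsilon\log n$ for a small constant $c$. We greedily build a coalition as in the KKL argument. At each stage we consider the current restricted function $h=f(1_{B_1},0_{B_0},\cdot)$ on the remaining coordinates, with $\Prob[h=1]=\mu\in[\varepsilon,1-\varepsilon]$; otherwise we are done. Either some coordinate has one-sided influence, toward $1$ or toward $0$, of at least $\varepsilon\log n/(Cn)$, or all one-sided influences are small. In the first case we add that coordinate to $B_1$ or $B_0$ accordingly and make progress. The difficulty, as in the BKKKL counterexample, is that progress may go in both directions. So we run the two greedy processes separately, making at most $Cn/(\varepsilon\log n)$ steps in each. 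One of them pushes $\mu$ to within $\varepsilon$ of its target unless it gets stuck in the second case.

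\textbf{The stuck case (main obstacle).} Suppose all discretized one-sided influences are at most $\varepsilon\log n/(Cn)$. Then the total influence of the discretized Boolean function $g$ on $\{0,1\}^{n}$ (after rebalancing $p_i$ toward $1/2$ via the F04 trick of splitting fibers) is at most $\varepsilon\log n/C$. The junta theorem of~\cite{F98} then says $g$ is $\varepsilon$-close to a junta $J$ on $\exp(O(I/\varepsilon))\le n^{O(1/C)}$ coordinates, which is $o(n/\log n)$ for large $C$. A junta $J$ is a nonconstant monotone function of few variables. Setting all its variables to $1$, or all to $0$, forces $J$ to its max or min value, namely $1$ or $0$. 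Because $g$ is $\varepsilon$-close to $J$, this coalition forces $g$, and hence $f$ by monotonicity of the rounding, with error $O(\varepsilon)$ after averaging over fibers.

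The delicate points will be making the discretization thresholds compatible with the biased-cube version of the junta theorem, since $p_i$ could be near $0$ or $1$. We will handle this by choosing the thresholds at fiber medians, so that each $p_i$ is bounded away from $0$ and $1$. We also need to make sure that the closeness to a junta transfers to forcing probability; this should be a Markov-type argument losing only constants in $\varepsilon$.
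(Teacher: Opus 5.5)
There is a genuine gap, and it sits in your main dichotomy, which is where the whole difficulty of the problem lies. Suppose the upward process fails. It then halts at a restriction $h=f(1_{B_1},\cdot)$ all of whose \emph{upward} one-sided influences $\E[h(1,X_{-i})]-\E h$ are below $\delta=\varepsilon\log n/(Cn)$. This says nothing about the downward influences $\E h-\E[h(0,X_{-i})]$ of the same $h$, which can be of order $1$. The BKKKL example $\mathbf 1[x\in[1/n,1]^n]$ has all upward influences $O(1/n)$ and all downward ones about $1/e$. Symmetrically, the downward process halts with small downward but uncontrolled upward influences, and it does so for a different restriction $f(0_{B_0},\cdot)$.

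Your stuck case needs \emph{both} one-sided influences of a \emph{single} function to be small. If they were, their sum (the probability that the fibre is nonconstant) would give $K(h)\le 2n\delta$, and Lemma~\ref{lem:junta} would finish. But neither process ever produces such a function, so the sentence ``one of them pushes $\mu$ to within $\varepsilon$ of its target unless it gets stuck in the second case'' is unjustified.

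Interleaving the two directions does not rescue the argument. Fixing a coordinate to $0$ can create a large upward influence elsewhere. For $h=\mathbf 1[x_i\ge 1-1/n\text{ or }x_j\ge 1/n]$, the upward influence of $i$ jumps from about $1/n$ to about $1$ once $x_j=0$. So $\E h$ need not move monotonically, and there is no potential bounding the number of steps. Going from ``all influences toward $0$ are small'' to ``a small coalition forces $1$'' is essentially as hard as the theorem itself.

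What is missing is a way to reach one function with small \emph{two-sided} influence while paying only $O(\rho n)$ coordinates. The paper does this by interpolating between the two directions rather than alternating them. Set $\nu_t=(1-\rho)\mu+(\rho-t)\delta_0+t\delta_1$. Then $t\mapsto F(t)=\E_{\nu_t^{\otimes n}}f$ is nondecreasing with $F'(t)=K_t$, the two-sided endpoint influence under $\nu_t$. Since $\int_0^\rho K_t\,dt\le 1$, some $t$ has $K_t\le 1/\rho$. The junta lemma, applied under $\nu_t$ via the transport $T_t$, yields $S$ and $b$. Monotonicity lets both atoms be moved to $b$, and that atom is realized by a random set $R$ of about $\rho n$ coordinates fixed to $b$. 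Choosing $\rho=2A/(\varepsilon\log n)$ balances $|S|\le\sqrt n$ against $2\rho n$.

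Two smaller points in your stuck case also need repair:
\begin{itemize}
\item Closeness of $g$ to a junta $J$ on $S$ gives no control over the fibre $y_S=1_S$, which has exponentially small measure. Instead, choose a typical assignment $a$ on $S$ whose fibre is nearly constant, put $b=J(a)$, and push $a$ to $b_S$ by monotonicity.
\item Your $g=\E[f\mid y]$ is real-valued, so the Boolean junta theorem does not apply as stated. Also, a single threshold $t_i$ cannot be a median of all fibres simultaneously. The paper avoids both problems by discretizing each coordinate dyadically into $m$ fair bits, so the function stays Boolean and each nonconstant fibre costs less than $2$ in bit influence.
\end{itemize}
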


\paragraph{Remark.}
The reduction to the monotone case by coordinatewise compression is
standard; see~\cite{F04,CG}.

The argument builds on the influential-coalition framework of
Chattopadhyay and Gurumukhani~\cite{CG} and uses the discretization
in~\cite[Claim~2.1]{F04} and the continuous-junta observation
in~\cite[Section~2.2]{F04}.

\subsection{Low total influence implies a junta with large one-sided influence}
{\bf{Ehud's commentary:}} What's shown here is that there's a one-sided analogy of the junta theorem in the continuous setting, i.e., if the total influence of the function is bounded by $K$, there's a coalition with size exponential in $K$ that has large one-sided influence. The proof is a minor variant on the modern explanation of how KKL implies BKKKL.

For monotone $h:[0,1]^n\to\{0,1\}$, put
\[
 K(h)=\sum_{i=1}^n
 \E\bigl[h(1,X_{-i})-h(0,X_{-i})\bigr].
\]
Thus $K(h)$ is the total probability of nonconstant coordinate fibres,
including fibres whose change occurs only at an endpoint.

\begin{lemma}\label{lem:junta}
There is an absolute constant $c$ such that, if $K(h)\le L$ with $L>0$,
then for every $0<\eta<1$ there are $S\subseteq[n]$ and $b\in\{0,1\}$ with
\[
 |S|\le\exp(cL/\eta),\qquad
 \Prob\bigl[h(b_S,X_{-S})=b\bigr]\ge1-\eta.
\]
\end{lemma}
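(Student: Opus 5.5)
We discretize $h$ to the Boolean cube, apply the junta theorem of~\cite{F98} there, and then pull the resulting junta back to $[0,1]^n$. The main obstacle is controlling the discretization error. The key point is that setting a coordinate to an extreme value is exactly what ``forcing'' means, so only the behaviour at the endpoints matters.

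\textbf{Step 1: discretization.} Following~\cite[Claim~2.1]{F04}, fix a threshold $t\in(0,1)$ and a random point $Y\in[0,1]^n$. For $z\in\{0,1\}^n$, let $x(z)$ be the point whose $i$th coordinate is $Y_i$ randomized in the natural way: we sample $U_i$ uniform, and $x_i=U_i$ conditioned on $U_i\ge t$ when $z_i=1$, conditioned on $U_i<t$ when $z_i=0$. With $t=1/2$, $z$ uniform gives $x$ uniform on $[0,1]^n$. For each realization of the ``conditioned'' randomness we get a monotone Boolean function $g(z)=h(x(z))$ on $\{0,1\}^n$. The discrete influence of coordinate $i$ on $g$ is the probability that changing $x_i$ between a value below $1/2$ and one above $1/2$ changes $h$. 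By monotonicity this is at most the probability that the fibre is nonconstant on $[0,1]$. Averaging over the randomness, $\E[I(g)]\le K(h)\le L$. By Markov, with probability at least $1/2$ over the randomness we have $I(g)\le 2L$; fix such a realization.

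\textbf{Step 2: junta.} By Friedgut's junta theorem~\cite{F98}, $g$ is $\eta/4$-close to a function $G$ depending only on a set $S$ with $|S|\le\exp(cL/\eta)$. Now $G$ is a function of $z_S$ alone, and since $g$ is monotone we may take $G$ to be the (thresholded) conditional expectation of $g$ given $z_S$, which is monotone. Consider $G(1_S)$ and $G(0_S)$. If $G$ is constant, say $G\equiv b$, then $g$ is $\eta/4$-close to $b$. Otherwise monotonicity gives $G(1_S)=1$ and $G(0_S)=0$. In either case choose $b$ with $G(b_S)=b$.

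\textbf{Step 3: pulling back (main obstacle).} We need $\Prob[h(b_S,X_{-S})=b]\ge1-\eta$. The difficulty is that closeness of $g$ to $G$ is only an average over $z_S$, whereas the single cell $z_S=b_S$ has tiny probability $2^{-|S|}$. Two ideas handle this.

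First, conditioning on $z_S=b_S$ only moves $x_S$ toward $b$. So monotonicity gives $h(b_S,X_{-S})\ge h(x)$ when $b=1$, and the reverse inequality when $b=0$. It therefore suffices to show that $\E[g\mid z_S=b_S]$ is close to $b$.

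Second, to get this, use the standard fact in the proof of the junta theorem: the junta $G(z_S)=\mathbb 1[\E(g\mid z_S)\ge1/2]$ arises because coordinates outside $S$ have total influence $\eta/4$-small in the $L^1$ sense of $\sum_{i\notin S}\|\partial_i g\|_2^2$. This controls variance within each fibre $z_S$ on average, but not at the specific fibre $b_S$.

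The plan to overcome this is to use monotonicity again. Any fibre $z_S=w$ with $G(w)=b$ and $\Var(g\mid w)$ small has $\E[g\mid w]$ close to $b$. Monotonicity then transfers this to the extreme fibre $b_S$, which dominates $w$ in the direction of $b$. It remains to show that a good $w$ with $G(w)=b$ exists. Since the bad fibres have mass at most $\eta/4$, this holds as long as $\Prob[G=b]>\eta/4$. If instead $\Prob[G=b]\le\eta/4$, then $G$ is essentially the constant $1-b$, and we switch to $b'=1-b$. That case needs $\Prob[g=1-b]\ge1-\eta/2$, and then any coalition, even the empty set, works.

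Handling these cases with the constants adjusted, so that $\eta/4$ fibre errors become at most $\eta$, completes the argument. I expect the fibre-variance transfer in Step 3 to be the delicate point, possibly requiring $S$ to be enlarged by the junta theorem's noise-stable version.
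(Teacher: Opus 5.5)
\textbf{Gap in Step 1.} The discretization is where the argument fails, and it breaks the pull-back in Step 3. To get a single Boolean function $g$ on $\{0,1\}^n$, and to apply Markov, you fix a realization of the within-half positions. After that, $g$ records $h$ only at the $2^n$ points $x(z)$. For uniform $z$ these points are no longer uniformly distributed on $[0,1]^n$.

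So your reduction ``it suffices to show that $\E[g\mid z_S=b_S]$ is close to $b$'' is false. Even after pushing $x_S$ to $b_S$ by monotonicity, this only controls $h(b_S,\cdot)$ averaged over the $2^{n-|S|}$ points $x_{-S}(z_{-S})$ of the fixed realization. It says nothing about $\Prob[h(b_S,X_{-S})=b]$ for uniform $X_{-S}$. Averaging over realizations does not rescue this, because the set $S$ and the bit $b$ produced by the junta theorem depend on the realization. Your fallback case with the empty coalition has the same defect: it gives a statement about $g$, not about $h$.

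More fundamentally, one bit per coordinate cannot represent $h$ faithfully. The indicator of $\{x_1\ge 1/4\}$ has $K(h)=1$, yet it is at distance $1/4$ from every function of the events $\{X_i\ge 1/2\}$.

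\textbf{The missing idea.} You need a discretization that preserves the measure and has vanishing error. The paper encodes each coordinate by $m$ dyadic bits: it sets $h_M=h\circ D_M$ with $M=2^m$ and views this as $H_M$ on $nm$ fair bits.
\begin{itemize}
\item Monotonicity gives $\Prob[h\ne h_M]\le n/M$. On each grid cell both functions lie between the values at the lower and upper corners; summing the corner differences and telescoping gives the bound.
\item The influence bound still transfers. A nonconstant block section of a monotone function is a threshold in binary order, so its $j$th bit has influence at most $2^{-(j-1)}$ and the block contributes less than $2$. This yields $I(H_M)\le 2K(h)+2n(n-1)/M$.
\item Apply the junta theorem with error $\eta/2$, and let $S$ be the coordinates owning the junta bits. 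This gives $Q(X_S)$ with $\Prob[h\ne Q(X_S)]\le\eta$ for genuinely uniform $X$.
\end{itemize}

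With this in hand, Step 3 takes one line. Averaging over $X_S$ gives an $a$ with $\Prob[h(a,X_{-S})\ne Q(a)]\le\eta$; set $b=Q(a)$ and push $a$ to $b_S$ by monotonicity. No case split, fibre-variance transfer, or noise-stable enlargement is needed.

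Two smaller errors in your Step 3 also become moot once you choose the best fibre this way. First, Markov gives that fibres with error above $\delta$ have mass at most $\eta/(4\delta)$, not $\eta/4$. Second, the junta theorem controls $\E_w\Var(g\mid z_S=w)$, not the total influence of the coordinates outside $S$.
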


\begin{proof}
For $M=2^m$, let $D_M(x)=\lfloor Mx\rfloor/M$ for $x<1$, with
$D_M(1)=(M-1)/M$. Set $h_M(x)=h(D_M(x_1),\ldots,D_M(x_n))$ and regard
this step function as a Boolean function $H_M$ on $nm$ fair bits.
Monotonicity gives
\begin{equation}\label{eq:approx}
 \Prob[h\ne h_M]\le n/M.
\end{equation}
Indeed, on each small cube both values lie between the values of $h$ at
its lower and upper corners. Summing these corner differences, and
telescoping one coordinate at a time, gives at most $nM^{n-1}$.

With all other blocks fixed, $H_M$ is a threshold in the binary order on
the remaining block. Its $j$th bit has influence at most $2^{-(j-1)}$:
only the unique prefix of length $j-1$ containing the threshold can be
pivotal. Thus a non-constant block section contributes less than $2$ to
the total bit influence $I(H_M)$, where each bit's influence is the
probability that flipping it changes the output. Consequently
\begin{align}
 I(H_M)
 &\le 2\sum_i\E\bigl[h(1,D_M(X_{-i}))-h(0,D_M(X_{-i}))\bigr]\notag\\
 &\le 2K(h)+\frac{2n(n-1)}{M}.\label{eq:bits}
\end{align}
For the last inequality, apply~\eqref{eq:approx} in dimension $n-1$ to
each endpoint section; the lower-grid average of the upper section is
at most its integral, and that of the lower section is at least its
integral minus $(n-1)/M$.

Take $M$ sufficiently large that the error in~\eqref{eq:approx} is at most
$\eta/2$ and $I(H_M)\le4L$. Friedgut's Boolean junta theorem~\cite{F98}
gives an $\eta/2$-approximation to $H_M$ depending on at most
$\exp(8c_0L/\eta)$ bits, where $c_0$ is its absolute constant.
Let $S$ be the original coordinates containing those bits. The resulting
function $Q(X_S)$ approximates $h$ with error at most $\eta$.
Averaging over $X_S$ gives a fixed assignment $a$ for which
\[
 \Prob[h(a,X_{-S})\ne Q(a)]\le\eta.
\]
Set $b=Q(a)$. Moving every coordinate of $a$ to the endpoint $b$ can only
increase the probability of output $b$. This proves the lemma with
$c=8c_0$.
\end{proof}

\subsection{Proof of Theorem 1}
{\bf{Ehud's commentary:}}
When trying to prove the main theorem, what I believe that any mathematician would attempt at first is the following idea; Find a variable with large influence towards zero. Set it to be equal to zero. Repeat until all variables have small influence towards zero. And then... and then what? What you would hope for is to prove that then there's a small coalition with decisive influence towards 1, but that's the source of difficulty of this conjecture - that implication seems to be as hard as the original problem. The ingenious fix that Chattopadhyay and Gurumukhani found involved using the heavy tool of Hatami's pseudo junta. What ChatGPT eventually came up with, as described below, is really a beautiful idea. It describes a continuum of measures that are perturbations of the uniform measure, and proves that for at least one of them the function we're studying has low total influence, and hence by the lemma proven above has a small junta with large one sided influence. It then elegantly manages to move back to the uniform measure, while slightly modifying the junta at random, in order to simulate the perturbed measure in the world of the uniform measure.
This trick reminds me a bit of the Dinur-Safra idea that they coined as "The Importance of Being Biased" in ~\cite{DS}  where they get great mileage out of the following observation: every monotone function $f$ is close to a junta with respect to the product measure $\mu_p^{\otimes n}$ for some value of $p$ (because as long as it's not approximable by a junta, the expectation of $f$, as a function of $p$ has large derivative, but it can't continue climbing for too long, since it's bounded by 1).

\paragraph{}
{\bf{Proof}:}
Let $f$ be monotone, and let $\mu$ denote the uniform measure on $[0,1]$. Fix $0<\rho<1$ and, for $0\le t\le\rho$, set
\begin{equation}\label{eq:measure}
 \nu_t=(1-\rho)\mu+(\rho-t)\delta_0+t\delta_1,
 \qquad F(t)=\E_{\nu_t^{\otimes n}}f.
\end{equation}
The function $F$ is a polynomial in $t$. Differentiating the product measure,
\begin{equation}\label{eq:derivative}
 F'(t)=\sum_i\E_{\nu_t^{\otimes(n-1)}}
                 [f(1,X_{-i})-f(0,X_{-i})]=:K_t.
\end{equation}
Since $\int_0^\rho K_t\,dt=F(\rho)-F(0)\le1$, there is
$t\in(0,\rho)$ with $K_t\le1/\rho$. Fix such a $t$.

To use Lemma~\ref{lem:junta} under $\nu_t$, define
\[
 T_t(u)=\clip_{[0,1]}\left(\frac{u-\rho+t}{1-\rho}\right),
 \qquad h(u)=f(T_t(u_1),\ldots,T_t(u_n)).
\]
Here $\clip_{[0,1]}(v)=\max\{0,\min\{v,1\}\}$.
The map $T_t$ sends Lebesgue measure to $\nu_t$, preserves the endpoints,
and is increasing. Hence $h$ is monotone and $K(h)=K_t\le1/\rho$.
Applying the lemma with $L=1/\rho$ and $\eta=\varepsilon/2$ gives $S,b$ with
\begin{equation}\label{eq:small-fibre}
 |S|\le\exp\left(\frac{A}{\rho\varepsilon}\right),\qquad
 \Prob_{\nu_t^{\otimes([n]\setminus S)}}
           [f(b_S,X_{-S})\ne b]\le\varepsilon/2,
\end{equation}
for an absolute $A\ge1$.

Now replace both endpoint atoms in~\eqref{eq:measure} by the favourable
endpoint $b$. Monotonicity shows that~\eqref{eq:small-fibre} still holds
under
\[
 \nu^{(b)}=(1-\rho)\Leb+\rho\delta_b.
\]
Sample this measure by putting each coordinate outside $S$ into a random
set $R$ independently with probability $\rho$, fixing those coordinates
to $b$, and sampling all the others uniformly. If
\[
 e(R)=\Prob\bigl[f(b_{S\cup R},X_{-(S\cup R)})\ne b\bigr],
\]
then $\E_R e(R)\le\varepsilon/2$. Also
$\Prob[|R|\le2\rho(n-|S|)]\ge1/2$ by Markov's inequality
(with the case $S=[n]$ immediate). On this event the conditional average
of $e(R)$ is at most $\varepsilon$. Thus some fixed $R$ satisfies both
bounds. The set $B=S\cup R$ therefore has
\begin{equation}\label{eq:core}
 |B|\le\exp\left(\frac{A}{\rho\varepsilon}\right)+2\rho n,
 \qquad \Prob[f(b_B,X_{-B})=b]\ge1-\varepsilon.
\end{equation}
All choices, including $b$ and $R$, precede sampling the remaining
uniform coordinates.

If $\varepsilon\log n>2A$, choose $\rho=2A/(\varepsilon\log n)$.
Then~\eqref{eq:core} gives
\[
 |B|\le\sqrt n+\frac{4An}{\varepsilon\log n}
       \le\frac{(4A+1)n}{\varepsilon\log n}.
\]
If $\varepsilon\log n\le2A$, using all $n$ coordinates suffices.
This proves Theorem~\ref{thm:main}.

\end{document}